\newtheorem{theorem}{Theorem}
\newtheorem{lemma}[theorem]{Lemma}
\newtheorem{corollary}[theorem]{Corollary}
\newtheorem{proposition}[theorem]{Proposition}
\newtheorem*{bem}{Remark}{\it}{}
\newenvironment{myrem}{\begin{bem}\begin{rm}}{\end{rm}\end{bem}}
\newcommand{\bigoh}{\operatorname*{O}}
\numberwithin{equation}{section}
\newcommand{\Z}{{\mathbb Z}}
\newcommand{\disc}{{\rm disc}}
\newcommand{\lcm}{{\rm lcm}}
\newcommand{\N}{{\mathbb N}}
\newcommand{\R}{{\mathbb R}}
\newcommand{\Q}{{\mathbb Q}}
\newcommand{\A}{{\mathbb A}}
\newcommand{\gen}{{\rm gen}}
\newcommand{\spn}{{\rm spn}}
\newcommand{\tr}{{\rm tr}}
\newcommand{\rpd}{{\rm rpd}}
\newcommand{\fo}{{\mathfrak o}}
\begin{document}

\title[Averages of Fourier coefficients]
{Averages of Fourier coefficients of Siegel modular forms and
  representation of binary quadratic forms by quadratic forms in four variables}  
\author[R. Schulze-Pillot]{Rainer Schulze-Pillot} 
\thanks{MSC 2000: Primary 11E12, Secondary 11F27 11F30 11F46 11E45}
\dedicatory{To the memory of Hiroshi Saito}
 \begin{abstract}
Let $-d$ be a a negative discriminant and let $T$ vary over a set of
representatives of the integral equivalence classes of integral binary
quadratic forms of discriminant $-d$.
We prove an asymptotic formula for $d \to \infty$ for the average over
$T$ of the number of representations 
of $T$  by an integral positive definite
quaternary quadratic form and obtain bounds for averages of 
Fourier coefficients of linear combinations of Siegel theta series. We
also find an asymptotic estimate from below on the 
number of binary forms of fixed discriminant $-d$ which are
represented by a given quaternary form. In particular, we can show
that for growing $d$ a positive proportion of the binary quadratic forms of
discriminant $-d$ is represented by the given quaternary quadratic form.
\end{abstract}

\maketitle

 \section{Introduction.} 
If $A$ is an integral positive definite symmetric $(m\times m)$ matrix
one calls a solution $X\in M(m\times n, \Z)$ of the system of
quadratic  equations of the form
 \begin{equation*}
Q_A(X) =
{^tX}AX = T 
 \end{equation*}
where  $T$ is another (half-) integral symmetric matrix of size $n
\leq m$ an (integral) representation of $T$ by $S$. It is well known
that the local global principle of Minkowski and Hasse is not valid
for integral representations, but if $m$ is large enough compared to
$n$ one can prove that a positive definite $T$ which is represented by $S$ over all $\Z_p$
and is large enough in a suitable sense is indeed represented by $S$
over the rational integers $\Z$, at least under some mild additional
conditions.
The bound on the size of $m$ necessary for this has recently been
pushed down to $m \ge n+3$, again under suitable additional
conditions, in \cite{ev}, see also \cite{rsp} for an attempt to
optimize those additional conditions. The case $m=n+2$ brings some
limitations due to the existence of the so called spinor exceptions
(see \cite{kneser_mz,hsia_spinor}), taking these into account a result
of the desired type could be reached in \cite{duke_sp} for $n=1, m=3$,
i.\ e.\ for representations of sufficiently large numbers by ternary
forms. 
An analogous result for $n=2$ asserting that all binary
quadratic forms of sufficiently large determinant (and perhaps large
minimum) are represented by a given quaternary quadratic form if they
are represented locally everywhere appears
to be out of reach at the moment. However, 
Einsiedler, Lindenstrauss, Michel, and Venkatesh obtained recently a
result (not yet published) for
a certain subset of $T$ by adapting Linnik's ergodic method, which was
invented to deal with the ternary case, to the problem of
representation of binary forms by quaternary forms. They also obtained
a result on representation of $T$ ``on average''  under the assumption
of the so called Linnik condition. 

In this note we prove similar, but somewhat different  average results
starting out from work of Böcherer and the author  \cite{bs_walds,bs_km} on Siegel theta series of quaternary positive definite
quadratic forms (or quadratic lattices) attached to ideals in Eichler
orders  of square free level in definite quaternion 
algebras. For such a quaternary quadratic lattice $\Lambda$
we proved there among others that the average $r_{av}(\Lambda, d)$
over binary symmetric 
matrices $T$ of fixed discriminant $-d$ of the representation numbers
$r(\Lambda, T)$ can be related to the product of the representation
numbers of $d$ by two attached ternary lattices $L,L'$. 

In the present article we show first 
that the results of \cite{bs_walds,bs_km}
can be extended to a more general situation than 
treated there. Whereas in those articles we had to restrict attention
to quaternary quadratic forms of square free level $N$ and determinant
$N^2$, we can now allow more general levels in the case of square
determinant and can also treat forms of prime determinant and certain
forms of square free determinant $\Delta$. In these latter cases the
role of the pair of ternary $\Z$-lattices in \cite{bs_walds,bs_km} is
taken over by one ternary lattice over the integers of  $\Q(\sqrt{\Delta})$. 
Taken together the
results now cover in particular all maximal quaternary quadratic
lattices of prime level. 

An important role in these generalizations is played by ideas of
Hiroshi Saito which he explained to  Böcherer and me in a letter in
2001; they relate  our work to Hijikata's theory of optimal
embeddings of quadratic orders.

We show then that the formulas for averaged representation numbers 
imply almost immediately asymptotic formulas  and
estimates for the 
$r_{av}(\Lambda,d)$ and hence also
for averages of Fourier coefficients of linear combinations of Siegel theta
series of these forms (Yoshida liftings). If such a linear combination
is a cusp form we obtain 
upper bounds for the averaged Fourier coefficients which appear to be new
and are sharper than what can be 
deduced from the known estimates for the individual Fourier
coefficients. As another consequence of the asymptotic formula for the
average representation number $r_{av}(\Lambda,d)$ we can derive a lower
bound for the number of binary
quadratic forms of fixed discriminant which are represented by a given
integral quaternary quadratic form, in particular, we can show that a
positive proportion of these binary forms is represented.

I heard of the results of Einsiedler, Lindenstrauss, Michel, and
Venkatesh in the talk of Michel at the International Colloquium on
Automorphic Representations and $L$-functions 2012 at the Tata
Institute of Fundamental Research, Mumbai, where a first version of
this note was also  written during a stay following that colloquium. I
thank the institute and in particular Dipendra Prasad for their
hospitality. 

\section{Quaternary lattices and quaternion algebras}
Following the work of Brandt and Eichler (see \cite{eichler_qfog}) on
the connection between quadratic spaces of dimension $4$ and their
orthogonal groups on one side and quaternion algebras on the other
side, Ponomarev has studied in \cite{pono_arith} in detail the
correspondence between quaternary quadratic lattices and orders and
ideals in quaternion algebras. Using these results we will consider
the following two situations.

\smallskip
{\bf Case A: Square discriminant}
\smallskip

Let $D$ be a quaternion algebra over $\Q$ with reduced norm $n$
and reduced trace $\tr$,  view $D$ as a
quadratic space with quadratic form $n$ and associated symmetric
bilinear form $b(x,y)=\tr(x\bar{y})$, where $y\mapsto \bar{y}$ is the
usual involution (quaternionic conjugation) on $D$. 
We assume $D$ to be ramified at $\infty$ and denote by $N_0$ the
product of the finite primes $p$ at which $D$ is ramified (i. e.,
$D\otimes \Q_p$ is a division algebra).
The special orthogonal group $SO(D,n)$ of the quadratic space $(D,n)$
is then isomorphic to $\{(x,y)\in D^\times \times D^\times \mid
n(x)=n(y)\}/Z(D^\times)$, where the center $Z(D^\times) =\Q^\times$ is
embedded diagonally into $D^\times\times D^\times$; an isomorphism is
given by sending the class of $(x,y)$ to the map $\sigma_{x,y}$ given
by $\sigma_{x,y}(d)=xdy^{-1}$. This isomorphism extends naturally to
the $v$-adic completions for a valuation $v=v_p$ of $\Q$ and to the
adelization. 
 
Let $R$ be an order (of full rank) in $D$ and consider the
decomposition 
\begin{equation*}
 D_{\A}^\times = \bigcup_{j=1}^hD_{\Q}^\times y_j R_{\A}^\times 
\end{equation*}
of the adelization $D_{\A}^\times$ into double cosets, where
$R_{\A}=D_\infty \times \prod_{p}R_p$ is the adelization of the order
$R$ and where $R_p=R\otimes \Z_p$. The $y_j$ may be chosen to be of
norm $1$ if the $R_p^\times$ have  $\Z_p^\times$ contained in
their norm groups, which will be the case in the sequel.
The ideals $y_iRy_j^{-1}=:I_{ij}$ with left order $R_i=y_iRy_i^{-1}$
and right order $R_j=y_jRy_j^{-1}$ represent  then all isometry
classes of lattices in the genus of $(R,n)$, with some classes
possibly occurring more than once.  

Let $D$ vary over the quaternion algebras over $\Q$ which are ramified
at $\infty$ and let $R\subseteq D$ vary over the orders in $D$ such
that $R_p$ contains the maximal order of the unramified quadratic
extension of $\Q_p$ if $p$ ramifies in $D$ and is an intersection of
at most two maximal orders in $D_p$ if $p$ splits in $D$.
We obtain then in this way representatives of all genera of quaternary
quadratic lattices $(L,q)$ of square discriminant with $q(L)\Z=\Z$, 
for which the Jordan splitting at the  prime $p$ consists of a
binary (even) unimodular and a binary (even) $p^{r_p}$-modular
component for some $r_p$ for all finite primes $p$. These orders are
Eichler orders of level 
$p^{r_p}$ at the split primes $p$ and orders of level $p^{r_p}$ with
odd $r_p$ in the terminology of \cite{pizer_quaternion2} for the
ramified primes. We write $N_1$ for the product of these $p$-adic
levels for the ramified primes, $N_2$ for the product of the
$p$-adic levels for the $p$ at which $D$ splits and $N=N_1N_2$ for the
global level of the order in question.

We will restrict our attention to these orders and lattices in the
sequel.
We further denote for $s\| N$ (i.e., $s\mid N$ with $\gcd(s,N/s)=1$) 
by $I_{ij}^{\#,s}$ the lattice obtained as $I_{ij}^{\#}\cap
\Z[\frac{1}{s}]I_{ij}$, where $\#$ indicates taking the dual lattice, and
by $I_{ij}^{*,s}$ the same lattice with the quadratic forms scaled
by the factor $s$. We notice that this last lattice is in the same
genus as the $I_{ij}$ and is hence isometric to some $I_{i'j'}$. We
call the  $I_{ij}^{*,s}$  the rescaled partial duals of $I_{ij}$ and
write $\rpd(I_{ij}, N')$ for the set of all $I_{ij}^{*,s}$ with $s\|
N'$, where $N'\|N$ is some fixed exact divisor of $N$. 

For any  lattice $\Lambda$ with positive definite quadratic form $q$ and
symmetric bilinear form $b$ and a  positive semidefinite matrix $T=(t_{kl})_{k,l=1..2}\in M^{\rm sym}(2, \Z)$ we write 
\begin{equation*}r(\Lambda,T):=\vert \{(x,y) \in \Lambda
  \mid \begin{pmatrix} 2q(x)&b(x,y)\\b(x,y)&2q(y)\end{pmatrix}=T\}\vert  
\end{equation*}    
for the number of representations of $T$ by the quadratic lattice
$(\Lambda,q)$ and $r^{*}(\Lambda,T)$ for the number of primitive
representations, i.e., representations $(x,y)$ where $\Z x+\Z y$ is a
direct (but not necessarily orthogonal) summand of $\Lambda$ as a $\Z$-module.
This notation will in particular be applied to the lattices
$I_{ij},I_{ij}^{*,s}$ with the norm form $n$ on them.
We write
\begin{equation*}
L_i:=\{x \in \Z \cdot 1 +2R_i\mid \tr(x)=0\}
\end{equation*} and $r(L_i,t)=\vert\{x\in L_i\mid n(x_i)=t\}\vert$ for a
positive integer $t$. The quadratic lattice $(L_i,n)$ has level $4N$
and discriminant $32N^2$. It is easily checked that for a fixed
decomposition $N=N_1N_2$ all the $L_i$ are
in the same genus of quadratic lattices.  
 
\medskip
{\bf Case B: Non-square discriminant}

\smallskip
Let $D$ be a quaternion algebra over $Q$ ramified at $\infty$ and let
$K=\Q(\sqrt{\Delta})$ be a real quadratic field of discriminant
$\Delta$. The quaternion algebra $D_K=D\otimes K$ over $K$ is then
ramified at the two infinite places of $K$ and at all those finite
places of $K$ lying over a prime $p \in \N$ which splits in $K/\Q$ and
for which $D$ is ramified at $p$. We denote by $x\mapsto \bar{x}$ the
extension of the standard involution on $D$ to $D_K$ and by $\tau$ the
extension of the non-trivial Galois automorphism of $K/\Q$ to $D_K$
(with $\tau(d\otimes a)=d \otimes \tau(a)$).

Then 
\begin{equation*}
V:=\{x\in D_K\mid \tau(x)=\bar{x}\},  
\end{equation*}
equipped with the norm form, is a quaternary quadratic space over $\Q$
of determinant $\Delta$, its special orthogonal group is isomorphic
to 
\begin{equation*}
\{\alpha \in D_K \mid n(\alpha)\in \Q^\times\}/\Q^\times,  
\end{equation*}
where the class of $\alpha$ acts via $x \mapsto \alpha x
\tau(\alpha)^{-1}$.
Varying $D$ and $\Delta$ one obtains all positive definite quaternary
quadratic spaces over $\Q$ of non-square discriminant.
For later use we notice that for a prime $p$ ramified in $K/\Q$ the
Witt invariant of $V$ (see e. g. \cite[Ch. V, \S 3]{lam_qf}) is $1$ if $D$
splits at $p$, $-1$ if $D$ is ramified at $p$. 

An order $R$ in $D_K$ is called symmetric if $\tau(\bar{R})=R$, in
that case $R\cap V=:\Lambda$ is a $\Z$-lattice of rank $4$ on $V$, and the
genus of $\Lambda$ consists of the isometry classes of the lattices $y \Lambda
\tau(y)^{-1}=y R \tau(y)^{-1}\cap V$ for $y \in D_{K,\A}^1:=\{y\in
D_{K,\A}^\times\mid n(y)\in \Q_{\A}^\times\}$, where the extension of $\tau$ to the
adeles of $K$ interchanges the components for the two places of $K$
above a rational prime which splits in $K/\Q$ and where we identify
$\Q_\A^\times$ with its image under the usual embedding into
$K^\times_\A$. 
If $y$ runs here
through the $y_i$ in a double coset decomposition $D_{K,\A}^1=\cup_{i=1}^hD^1_K
y_i R^1_\A$ (with $R^1_\A=D_{K,\A}^1\cap R_\A^\times$), one obtains
all classes in the genus of $\Lambda$ (not necessarily only once). The
$y_i$, having norm in $\Q_\A^\times$, can be chosen to have norm $1$
for the orders considered below. We notice
that $I_i:=y_i R \tau(y_i)^{-1}$ is an ideal with left order $R_i=y_i
R y_i^{-1}$ and right order $\tau(R_i)$.

The genus of maximal lattices on $V$ is obtained by taking for $R_{(p)}$
a symmetric Eichler order of level $(p)$ if $D$ is ramified at the rational
prime $p$ and $p$ is inert in $K/\Q$, and a (symmetric) maximal order
of $(D_K)_v$ at all other finite places $v$ of $K$. The determinant of
these maximal lattices is then $\Delta N^2$, where $N$ is the product
of all finite primes which ramify in $D$ but not in $K/\Q$. We will
restrict later to the case where this determinant is square free, odd
and congruent to $1$ modulo $4$.

\section{Averages of representation numbers}
With notations as above and some positive definite quadratic lattice
$\Lambda$ we write for a discriminant $-d$ 
\begin{equation*}
  r_{av}(\Lambda,d)= \sum_T\frac{r(\Lambda,T)}{\epsilon(T)},
\end{equation*}
where the sum is over a set of $SL_2(\Z)$-equivalence classes of integral symmetric matrices
$T=\bigl(\begin{smallmatrix}2a&b\\b&2c\end{smallmatrix}\bigr)$ of
(signed) discriminant $b^2-4ac=-d$ and where $\epsilon (T)$ denotes
the number of proper automorphisms (or units) of $T$, i.\ e. the
number of $U\in SL_2(\Z)$ with ${}^tU TU=T$. The primitive average
$r_{av}^{*}(\Lambda,d)$ is defined analogously by using the primitive
representation numbers $r^{*}(\Lambda,T)$.

In \cite{bs_walds,bs_km} we related the average representation numbers
$r_{av}(I_{ij},d)$ for the (proper) ideals $I_{ij}$ of Eichler orders of square
free level to sums of products $r(L_i,d)r(L_j,d)$ of representation
numbers of the associated ternary lattices introduced in case A in the previous
section. Since we want to establish a similar relation in a more
general context we recall the setup.

With $D,R\subseteq D$ as in Section 2, case A let $k_1\subseteq D$ be
an imaginary quadratic field and let $k_2 \subseteq D$ be isomorphic
to $k_1$. The set 
\begin{equation*}
\{z \in D \mid \bar{z}k_1z\subseteq k_2\}  
\end{equation*}
is then the orthogonal sum $U_1\perp U_2$ of two two-dimensional
subspaces $U_1,U_2$ of $D$ with $k_1U_\nu=U_\nu=U_\nu k_2$ for $\nu=1,2$, and we
can associate to the optimally embedded quadratic suborders
$\fo_1=k_1\cap R$ and $\fo_2=k_2\cap R$ of $R$ the primitive binary
sublattices $M_\nu=U_\nu\cap R \, (\nu=1,2)$ of $R$, which have a
quadratic left order $\fo_1$ and right order $\fo_2$. 

More generally, with
the $R_i, I_{ij}$ as defined above we have for $1 \le i,j\le h$ the
binary lattices $M_\nu=U_\nu\cap I_{ij}$
with associated quadratic left order $\fo_1=k_1\cap R_i$
and quadratic right order $\fo_2=k_2\cap R_j$ contained in the left
order $R_i$ 
resp. in the right order $R_j$ of the ideal $I_{ij}$.
Conversely, given a primitive binary sublattice $M \subseteq I_{ij}$
we obtain its optimally embedded (into $R_i$ resp. $R_j$) quadratic
left and right orders. It is then easy to see that the determinant of
$M$ and the determinants of its associated quadratic orders, viewed as
binary quadratic lattices, generate the same rational square
class. Moreover, considering the ternary lattices $L_i\subseteq R_i$
defined in Section 2 we showed that one has a bijection between $\{x
\in L_i \text{ primitive } \mid n(x)=d\}/\{\pm 1\}$ and optimally
embedded quadratic suborders of discriminant $-d$  in $R_i$, so that
we obtain in summary  a correspondence between pairs of lines in $L_i,
L_j$ on one side and (pairs of) primitive binary sublattices of
$I_{ij}$. So far this construction does not depend on the level of the
quaternion orders involved.

A local computation then showed in the case of square free level that the discriminant of 
of the completion $(M_\nu)_p$ at a prime $p$ is equal to the
$\text{lcm}$ of the discriminants of the associated quadratic orders,
unless $p$ divides the level $N$ and both discriminants are units at
$p$, in which case one of the $(M_\nu)_p$ has the same discriminant as
the quadratic orders, while for  the other one this discriminant is
multiplied by $p^2$.

A partial generalization of this is:

\begin{proposition}
Let $R$ be an order in $D$ such that the completion $R_p$ is
\begin{itemize}
\item an order of level $p^{r_p}$ with $r_p$ odd (see
  \cite{pizer_quaternion2}) for the primes $p$ at which $D$ is
  ramified
\item an Eichler order of level $p^{r_p}$ for the primes $p$ at which
  $D$ splits,
\end{itemize}
with $r_p=0$ for almost all $p$.
Let $N_1, N_2, N$ be as in Section 2, case A.

Let primitive vectors $x \in L_i, x'\in L_j$ be given with $d=n(x),
d'=n(x')$ in the same rational square class and $p \nmid d\cdot d'$
for all $p\mid N$ with $r_p>1$, let $M_1, M_2$ be the two primitive
binary sublattices of $I_{ij}$ described above.

Then 
\begin{eqnarray*}
\disc(M_1)&=&-\lcm(d,d')s_1^2\\
\disc(M_2)&=& -\lcm(d,d')s_2^2,  
\end{eqnarray*}
where $s_1,s_2$ are relatively prime, $s_1 \mid\mid N, s_2 \mid\mid N$
and $p \mid N$ divides $s_1s_2$ if and only if $p\nmid d\cdot d'$. 
\end{proposition}
\begin{proof}
This has been proven for square free $N$ in \cite{bs_walds,bs_km}. It
remains to do the local computations for a prime $p$ with $p^2 \mid N$
and $p\nmid d \cdot d'$.  As in \cite{bs_walds,bs_km} we may assume
$R_i=R_j=R=I_{ij}$.

If $p \mid N_1$ we have $R_p\subseteq R_{p,{\rm max}}$, the unique
maximal order in the division algebra $D_p$, and $R_p$ as a quadratic
lattice is the orthogonal sum of a binary unimodular anisotropic
sublattice $K_1$, isometric to the maximal order of the unique
unramified quadratic extension of $\Q_p$ with its norm form as
quadratic form, and a $p^{r_p}$-scaled copy $K_2$ of this binary
lattice. In particular, the quadratic suborders $\fo, \fo'$ of $R_p$ associated to
$x,x'$ are isomorphic to this unramified quadratic maximal order.

By Lemma 6 b) of \cite{bs_walds}, we have 
$z_1\in R_{p,{\rm  max}}^\times$ with $z_1^{-1}\fo z_1=\fo'$.    
The orthogonal complement $K_1$ of $\fo_p$ in $R_p$ is then mapped by
conjugation with $z_1$ to a $p^{r_p}$-modular  (hence
  $p^{r_p}\Z_p$-maximal) lattice on the anisotropic subspace
  orthogonal to $\fo'_p$. Since the $p^{r_p}$ -maximal lattice on an
  anisotropic space is unique, we have $z_1^{-1}K_1z_1=K_1'$, where
  $K_1'$ is the orthogonal complement in $R_p$ of $\fo'$. In
  particular, we have $z_1^{-1}R_pz_1=R_p$, which by Theorem 4.2 of
  \cite{hi-pi-she} implies $z_1\in R_p^\times$. This gives
  $(M_1)_p=\fo_pz_1$, so $(M_1)_p$ is unimodular and can be split off
  orthogonally in $R_p$. The other lattice $(M_2)_p$ is then a
  primitive lattice on the space complementary to $(M_1)_p$, hence
  isometric to a $p^{r_p}$-scaled copy of $(M_1)_p$.

\smallskip
For $p \mid N_2$ we can copy the proof of \cite[Proposition
2]{bs_walds}, with the only difference that the index of $R_p$ in the
two maximal orders $R_p^0, R_p^1$ is now not $p$ but $p^{r_p}$, and
similarly the index of the ternary lattices occurring there changes
from $p$ to $p^{r_p}$. One obtains as there that one can find a $z\in
R^\times$ with $z^{-1}\fo z=\fo'$, and as above we see that one of
$(M_1)_p,(M_2)_p$ has discriminant $-n(x)$ and the other one
$-p^{2r_p}\cdot n(x)$.
\end{proof}
As in \cite[Theorem 2.1]{bs_km} we obtain from this: 
\begin{theorem}\label{splitcase_average}
Let  $-d$ be a discriminant which is not divisible by any $p\mid N$
with $r_p>1$ and  write $N_d:=\frac{N}{\gcd(N,d)}$. Then 
\begin{equation}\label{kmformula1}
  \sum_{s\mid N_d}\sum_{m^2 \mid d}r_{av}^{*}(I_{ij}^{*,s},\frac{d}{m^2})=r(L_i,d)r(L_j,d)
\end{equation}
or equivalently
\begin{equation}
  \label{kmformula1b}
   \sum_{s\mid N_d}r_{av}^{*}(I_{ij}^{*,s},d)=\sum_{m\in \N,m^2\mid d}\mu(m)r(L_i,\frac{d}{m^2})r(L_j,\frac{d}{m^2}).
\end{equation}
If $N_d$ is a prime power this simplifies to
\begin{equation}\label{kmformula2}
 \sum_{m^2 \mid d}r_{av}^*(I_{ij},\frac{d}{m^2})=
  \begin{cases}
r(L_i,d)r(L_j,d)    & N_d=1\\
\frac{1}{2}r(L_i,d)r(L_j,d)& N_d\ne 1
\end{cases}
\end{equation}
respectively
\begin{equation}
  \label{kmformula2b}
 r_{av}^*(I_{ij},d)=\begin{cases}
\displaystyle \sum_{m\in \N,m^2\mid
  d}\mu(m)r(L_i,\frac{d}{m^2})r(L_j,\frac{d}{m^2})&N_d =1\\
\displaystyle \frac{1}{2}\sum_{m\in \N,m^2\mid
  d}\mu(m)r(L_i,\frac{d}{m^2})r(L_j,\frac{d}{m^2})&N_d \ne 1
\end{cases}. 
\end{equation}
In particular, if in addition  $-d$ is fundamental the sum over
$m^2\mid d$ can be omitted in the above formulas.
\end{theorem}

Before treating the case of non-square discriminant (case B) we
rephrase the setting of case A since this rephrasing facilitates the
proof in case B:

Firstly, as has been pointed out to us by Hiroshi Saito
\cite{saito_letter}, our results from \cite{bs_walds,bs_km} (and their
extension above) can be viewed using Hijikata's theory of 
embeddings of quadratic orders into quaternion orders
\cite{hijikata}. For this, let  $-d$ as above be a negative
discriminant and $f=X^2-sX+n\in \Z[X]$ with $s^2-4n=-d$. Then by
associating to $x\in L_i$ with $n(x)=d$ the linear map
$\varphi:k:=k_f:=\Q[X]/(f)\to D$ given by $\varphi(1)=1,
\varphi(X+(f))=\frac{x+s}{2}$, one obtains a one to one correspondence
between the $x \in L_i$ with $n(x)=d$ and the ring homomorphisms
$\varphi:k_f \to D$ with $\varphi(X+(f))\in R_i$. In this
correspondence, primitive vectors $x$ of norm $d$ 
correspond to optimal embeddings of the quadratic order of
discriminant $-d$ in $k_f$ into $R_i$.

Given two such embeddings $\varphi_1,\varphi_2$ there exists  $z\in
D^\times$ with $z^{-1}\varphi_1 z=\varphi_2$, and the space 
$$U(\varphi_1,\varphi_2)=\varphi_1(k_f)z=z\varphi_2(k_f)$$ is
independent of the choice of $z$. Moreover, $\overline{\varphi_2}$
given by $\overline{\varphi_2}(X+(f))=\frac{s-x}{2}$ has the same
image $\varphi_2(k_f)=\overline{\varphi_2}(k_f)$ as
$\varphi_2$, and the spaces $U_1,U_2$ described above are (with some
numbering) the same as $U(\varphi_1,\varphi_2),
U(\varphi_1,\overline{\varphi_2})$.
In addition, we have
$U(\varphi_1,\varphi_2)=U(\overline{\varphi_1},\overline{\varphi_2}),
U(\varphi_1,\overline{\varphi_2})= U(\overline{\varphi_1},\varphi_2)$,
so that we obtain a two to one correspondence between pairs  of
embeddings of orders in $k_f$ into quaternion orders $R_i,R_j$, at least one of which is optimal, 
on one side and primitive binary sublattices $M$ of $I_{ij}$ on the
other side. Our arguments from \cite{bs_walds,bs_km} can then be replaced  
using results from \cite{hijikata}.

To combine this setup with the results of Ponomarev \cite{pono_arith}
for the treatment of case B we notice that in case A the second Clifford
algebra $C^+(D)$ of the quadratic space $(D,n)$ can be identified with
$D\oplus D\cong D\otimes K$ with $K=\Q\oplus \Q$ in the way described
in \cite{pono_arith} and that the quadratic space $(D,n)$ is then
retrieved as being isometric to 
\begin{equation*}
 \tilde{D}:=\{\alpha \in C^+(D) \mid \bar{\alpha}^\tau=\alpha\}, 
\end{equation*}
where $\tau$ is the involution on $C^+(D)=D\otimes K$ induced by the
automorphism $(x_1,x_2)\mapsto (x_2,x_1)$ of $K=\Q\oplus \Q$, and
where $\alpha\mapsto \bar{\alpha}$ is the extension of the
quaternionic conjugation on $D$ to $D\otimes K$ which is trivial on
$1\otimes K$.

We can then view a pair $(\varphi_1,\varphi_2)$ of embeddings of $k_f$
into $D$ as giving an embedding $\varphi: y\mapsto
(\varphi_1(y),\varphi_2(y))\in C^+(D)$ of $k_f$ into $C^+(D)$. 
If $\varphi_1,\varphi_2$ map the order $\fo_f\subseteq k_f$ of
discriminant $-d$ into $R_i,R_j$ respectively,
$\varphi=(\varphi_1,\varphi_2)$ maps $\fo_f$  into the order
$R_i\oplus R_j$ of $D\oplus D \cong C^+(D)$. This embedding is optimal
in the sense that $\varphi(k_f)\cap(R_i\oplus R_j)=\varphi(\fo_f)$ if
and only if at least one of $\varphi_1,\varphi_2$ is an optimal
embedding of $\fo_f$ into $R_i$ resp. $R_j$.

Taking,
as above, $z \in D^\times$ with $z^{-1}\varphi_1z=\varphi_2$ and
writing $\alpha=(z,\bar{z})\in C^+(D)$, we have
$\bar{\alpha}^\tau=\alpha$,
$$\alpha^{-1}(\varphi_1,\overline{\varphi_2})\alpha=
(\varphi_2,\overline{\varphi_1})=
\overline{(\varphi_1,\overline{\varphi_2})}^\tau$$ and 
\begin{equation*}
(\varphi_1,\overline{\varphi_2})\alpha=(\varphi_1z,\overline{\varphi_2}\bar{z})
=(\varphi_1z,\overline{z\varphi_2})=(\varphi_1z,\overline{\varphi_1z}),  
\end{equation*}
so that
\begin{equation*}
\{(\varphi_1(y),\overline{\varphi_2(y)})\alpha\mid y \in
k_f\}\subseteq C^+(D)  
\end{equation*}
is the image of the two-dimensional space
$U(\varphi_1,\varphi_2)\subseteq D$ from above under the
identification of $D$ with $\tilde{D}\subseteq C^+(D)$.

The results from \cite{bs_walds,bs_km} and their reinterpretation in Saito's
approach can therefore also be viewed  as a computation of
discriminants in the two to one correspondence between 
optimal embeddings of quadratic orders in $k_f$ into the order  $R_i\oplus
R_j=(y_i,y_j)(R\oplus R)(y_i,y_j)^{-1}\subseteq C^+(D)$ and primitive binary sublattices of
$I_{ij}=\tilde{D}\cap(y_i,y_j)(R\oplus R)\tau((y_i,y_j)^{-1})\subseteq \tilde{D}\subseteq C^+(D)$.

To generalize this to the non split case we need a variant of the
Skolem-Noether theorem:

\begin{theorem}\label{skolemnoether}
Let $B$ be a central simple algebra over the field $K$ of odd
characteristic with an involution $x \mapsto x^I$ of the second kind,
denote by $K_0$ the fixed field of $I$.

Let $C_1$ be a commutative $K_0$-algebra with an embedding (of
$K_0$-algebras) $\varphi:C_1 \to
B$ and assume that $\varphi(C_1)$ and $K$ are linearly disjoint
over $K_0$, i. e., the $K$-algebra $\widetilde{\varphi(C_1)}$ generated by 
$\varphi(C_1)$ is isomorphic to $K\otimes_{K_0}C_1$.  Denote by $C_2$
the centralizer (commutant) in $B$ of 
$\widetilde{\varphi(C_1)}$.

Then  the
set of $\alpha \in B$ such that $\alpha \varphi^I =\varphi \alpha$
and $\alpha^I=\alpha$ is a $K_0$-vector space of dimension
$\dim_{K}(C_2)$.

If $B$ is a division algebra there exists $\alpha \in
B$ with $\alpha^I=\alpha$ such that $\alpha^{-1} \varphi
\alpha=\varphi^I$.
\end{theorem}

\begin{proof}
By the assumption of linear disjointness we can continue the
$K_0$-iso\-mor\-phism $\varphi(x)\mapsto \varphi(x)^I$ from $\varphi(C_1)$
to $\varphi(C_1)^I$ to a $K$-algebra isomomorphism $\rho$ from
$\widetilde{\varphi(C_1)}$ to the $K$-algebra generated by $\varphi(C_1)^I$.

By the theorem of Skolem-Noether there exists $\alpha_1\in B$ with
$\alpha_1^{-1} b \alpha_1= \rho(b)$ for all $b
\in\widetilde{\varphi(C_1)}$, in particular we have $\alpha_1^{-1}
\varphi(x) \alpha_1= \varphi(x)^I$ for all $x \in C_1$. 

The set of $  \alpha \in B$ such that $\alpha \varphi(x)^I =\varphi(x)
\alpha$ holds for all $x \in C_1$ is then a non zero $K$-vector space $W_0$ of dimension
$\dim_K(C_2)$.  Moreover, for $\alpha\in W_0$ application of $I$ gives
$\alpha^I\varphi^I=\varphi\alpha^I$, so that $I$ operates on
$W_0$, viewed as a vector space over $K_0 \subseteq K$. Multiplication by a $c \in K$ with $c^I=-c$ switches the $+1$
and the $-1$ eigenspace of $I$ in $W_0$, so both eigenspaces have the
same dimension over $K_0$, namely $\dim_K(C_2)$. 
If $B$ is a division algebra, all nonzero $\alpha \in
W_0$ are invertible and satisfy $\alpha^{-1} \varphi \alpha=\varphi^I$.
\end{proof}
\begin{myrem}
One could also use recent results of Villa \cite{villa} for the proof
of the above theorem.  
\end{myrem}
\begin{corollary}\label{space_correspondence}
Let $D$ be a quaternion algebra over $\Q$ with standard involution $b
\mapsto \bar{b}$, let $K$ be a quadratic extension of $\Q$ and put
$D_K=D\otimes_\Q K$, let $\tau$ be the nontrivial automorphism of $K$.
Denote by $I$ the involution $b \mapsto \tau(\bar{b})$ of the
second kind of $D_K$, by $V$ the ($4$-dimensional) $\Q$ vector space of all $I$-invariant
elements of $D_K$ and equip $D_K$ and $V$ with the quaternionic norm
form of $D_K$ as a quadratic form. Let $k=\Q(\sqrt{-d})$ be a quadratic extension of $\Q$ of
discriminant $-d$ which can be embedded into $D_K$ and is linearly
disjoint from $K$. 

Then there is a bijective  correspondence between (unordered)  pairs of embeddings 
$\varphi_1,\varphi_2:k \to D_K$ with
$\varphi_2(x)=\overline{\varphi_1(x)}$ for all $x \in k$  and binary
quadratic subspaces $W$ of determinant $d$ of $V$.
This correspondence is given by associating to $\varphi\in\{\varphi_1,\varphi_2\}$ as above the
set $W$ of $\alpha \in V$ with $\varphi(x)\alpha=\alpha \varphi(x)^I$ for
all $x \in k$.
\end{corollary}
\begin{proof}
Given an embedding $\varphi$, the set $W\subseteq V$ is a
$2$-dimensional $\Q$ vector space by the Theorem, and if we have
$\alpha \in W$ invertible, one sees that $W=\{\varphi(x)\alpha\mid x
\in k\}$, so that the quadratic space $W$ has the same determinant
$d$ as the quadratic space $k$ (equipped with the norm form).    
If $\varphi_1, \varphi_2$ are as above the associated sets of $\alpha
\in V$ are obviously the same. Conversely, given a binary subspace $W$ of
determinant $d$ of $V$, we consider an orthogonal basis $\{w_1,w_2\}$
and put $b:=w_1\overline{w_2}$, we have $\tr(b)=0$ and
$n(b)=d$. Associating to $W$ the embedding $\varphi:k=\Q(\sqrt{-d})\to
D_K$ with $\varphi(\sqrt{-d})=b$ we have constructed the inverse of  the map
$\varphi \to W$ defined above. 
\end{proof}
\begin{corollary}\label{lattice_correspondence}
With notations as above we  let (as in the discussion of case B in
Section 2) $R$ be a symmetric order in $D_K$ and
$\Lambda_i=y_iR\tau(y_i)^{-1}\cap V$ be
a lattice in the genus of $\Lambda_1:=R\cap V$, put $R_i:=y_iRy_i^{-1}$.
Then we have a bijection between pairs $(\varphi,
\bar{\varphi})$ of optimal embeddings of orders of (not necessarily
fundamental) discriminant
$-d$ in $k=\Q(\sqrt{-d})$ into $R_i$ and of primitive binary sublattices of determinant
$dm^2$ of $\Lambda_i$ 
with $m \in \Z$.  
\end{corollary}
\begin{proof}
The assertion follows directly from the previous corollary by taking
the preimage of $\varphi(k)\cap R_i$ as order in $k$ and $\Lambda_i \cap W$ as
binary sublattice in $\Lambda_i$.  
\end{proof}
As in case A it remains to compute the discriminants in the matching
of the last corollary. For this we restrict now to the case that $2$
is unramified in $K/\Q$ and that the primes ramifying in $D$ are
precisely those that are ramified in $K/\Q$. This implies that the
discriminant of $\Lambda$ is odd
and square free (congruent to $1$ modulo $4$). A more general
situation (as in case A) 
can probably be handled as well, but the matching
leads then to quite complicated and unpleasant formulas, since we have
to handle determinants $d, ds^2, d\Delta,ds^2\Delta$ simultaneously,
where $s$ runs over divisors of the level.

\begin{lemma}
Let $D$ be a  quaternion algebra over $\Q$, let $\Delta$ be an odd
square free determinant, $K=\Q(\sqrt{\Delta}), D_K=D\otimes_\Q K$,
assume that the primes ramifying in $D$ are
precisely those that are ramified in $K/\Q$.

Let $R$ be a symmetric maximal order in $D_K$ and $\Lambda=R\cap V$,
where $V=\{\alpha \in D_K \mid \bar{\alpha}^{\tau}=\alpha\}$.

With a double coset decomposition $D_{K,\A}^1=\cup_{i=1}^hD_K^1
y_i \R_\A^1$ (with $y_1=1$ and $n(y_i)=1$ for all $i$ ) put  $I_i:=y_i
R \tau(y_i)^{-1}, \Lambda_i=V\cap  I_i$ and $R_i=y_iR y_i^{-1}$.

Let $k$ be a
quadratic field different from $K$ which can be embedded into $D_K$
and
$\varphi: {\mathfrak o}\to R_i$ an optimal embedding of the order
${\mathfrak o}\subseteq k$ of discriminant $-d$ of $k$.

Then the primitive binary sublattice $M\subseteq V$ of $\Lambda_i$ associated to
$\varphi$ by Corollary \ref{lattice_correspondence} has determinant $d$. 
\end{lemma}
\begin{proof}

As in case A we check this locally, writing  the index $p$ indicating
completion only where necessary for clarification in the sequel. Also,
for this it is enough to consider $\Lambda=\Lambda_1$.

If $p$ is a prime that splits in $K/\Q$ we are locally in the
situation of case A and can apply the result proven there.

If $p$ is inert in $K/\Q$ the algebra $D\otimes K_p$ is isomorphic to
the matrix ring over $K_p$ and we can, as in \cite{bs_walds,bs_km}, find an
$\alpha_1 \in R_p^\times$ with $\alpha_1^{-1} \varphi(x)
\alpha_1=\tau(\overline{\varphi(x)})$ for all $x \in k$ (the argument
given in \cite{bs_walds,bs_km} is  for the matrix ring over $\Z_p$ but stays
valid if $\Z_p$ is replaced by the integers of any local field).
Since $K_p/\Q_p$ is unramified, Hilbert's Theorem 90 holds for the
local units, and similarly as in \cite[Section 2.5]{kneser_galois} we can modify
$\alpha$ by a unit $\epsilon \in {\mathfrak o}_K^\times$ to obtain
$\alpha\in R_p^\times\cap V$ with
$\alpha^{-1}\varphi(x)\alpha=\tau(\overline{\varphi(x)})$ for all $x
\in k$. Since $M=\varphi({\mathfrak o}\alpha)$ by our construction, $M$
has the same determinant $d$ as ${\mathfrak o}$ (with the norm form as
quadratic form).

\medskip
If $p \ne 2$ is ramified in $K/\Q$, we have by assumption that $p$ is
ramified in $D$. We denote by $\hat{R}=\hat{R}_p=\{z \in D_p \mid
n(z)\in \Z_p\}$ the unique maximal order in 
$D_p$ and choose $R=R_p=\hat{R}\otimes {\mathfrak o}_K+D^{(p)}\otimes
\delta^{-1}{\mathfrak o}_K$ with
$\delta=\sqrt{\Delta} \in K$ and $D^{(p)}=\{z\in D_p\mid n(z)\in
p\Z_p\}$; it is easily verified that $R$ is indeed a symmetric maximal
order in (the completion of) $D_K$ and that $R\cap D=\hat{R}$ holds (identifying $D$ with
$D\otimes 1 \subseteq D_K$).

As above we write  $k=k_f=\Q[X]/(f)$ with
$f=X^2-sX+n\in \Z[X]$ and $s^2-4n=-d$ and $ {\mathfrak o}={\mathfrak o}_f=\Z[X]/(f)$.
To a vector $w \in L_p=\{z \in \Z_p1+2R_p=R_p\mid
{\rm tr}(z)=0\}$ of norm $d$ we associate the embedding $\varphi$ of
${\mathfrak o}$ into $R$ given by $2\varphi(X+(f))-s=w$ and vice
versa. As in 
\cite{bs_walds,bs_km} by this correspondence between  the embedding
$\varphi$ and the
vector  $w$ we obtain a bijection between optimal embeddings of
${\mathfrak o}$ into $R$ and $\Z$-primitive vectors $w\in L$ (i.e.,
$rw \in L$ with $r\in \Q$ implies $r\in \Z$) with $n(w)=d$.

The vector $w$ can be written as $v_1\otimes 1+v_2\otimes \delta$ with
$v_1,v_2 \in D$ of trace $0$, we have
$\tau(\bar{w})=-v_1\otimes 1+v_2\otimes \delta$ and $d=n(v_1)+\Delta n(v_2)$. The
condition $n(w)=d\in \Z$ then implies that $v_1,v_2$ are orthogonal
with respect to the trace bilinear form on $D$. 

The vector
$\alpha:=rv_2\otimes \delta+tv_1v_2\otimes \delta+tv_2^2
\Delta$ with $r,t \in \Q$ satisfies then $\tau(\bar{\alpha})=\alpha$
and $\alpha \tau(\bar{w})=w\alpha$ and hence
$\alpha^{-1}\varphi(x)\alpha=\tau(\overline{\varphi(x)})$ for all $x
\in k$.

We notice that in the trace zero part $\hat{R}^0$ of $\hat{R}$ each vector of norm
in $\Z_
p^\times$ can be split off orthogonally with $p$-modular
orthogonal complement, in particular, all vectors in $\hat{R}^0$
orthogonal to it have norm divisible by $p$. 

We have $$n(\alpha)=\Delta n(v_2)(r^2+t^2n(v_1)+t^2\Delta
n(v_2))=\Delta n(v_2)(r^2+t^2d).$$ 
Since $R=R_p$ is symmetric we have
$v_1\otimes 1 \in R, v_2\otimes\delta \in R$.  This implies $v_1 \in \hat{R}, v_2 \in
p^{-1}\hat{R}$ with $pn(v_2)\in \Z$, and at least one of $v_1,pv_2$ is
primitive in $\hat{R}$. In particular, since $\hat{R}$ consists of all
elements of $D$ of integral norm, we have $n(v_1)\in \Z^\times_p \cup
p\Z^\times_p$ or
$n(v_2)\in p^{-2}\Z^\times_p\cup p^{-1}\Z^\times_p$. In the second
case we see that in
fact $n(v_2)\in p^{-1}\Z^\times_p$ must hold. We can then choose $r=1,
t=p$ above and obtain $\alpha \in R$ with $n(\alpha)\in \Z_p^\times$, hence $\alpha \in
R^\times$, which proves the assertion in this case. 

If
$pv_2=p^\nu\tilde{v_2}$ with $\nu>0$, $\tilde{v_2}$ primitive in
$\hat{R}$, we write $t=p^{-\nu}\tilde{t}$ and have $\Delta
n(v_2)t^2=p^{-1}\tilde{t}^2n(\tilde{v_2})$ and
$tv_2=p^{-1}\tilde{t}\tilde{v_2}$.
If $d \in \Z_p^\times$ one has $n(v_1)\in \Z_p^\times$ and therefore
$n(\tilde{v_2})\in p\Z_p^\times, n(v_1\tilde{v_2})\in p\Z_p^\times$,
which implies $p^{-1}\tilde{v_2}\otimes \delta \in R, p^{-1}v_1
\tilde{v_2}\otimes \delta \in R$  and hence $\alpha \in R$ for  $r \in
p^{-\nu}\Z_p, \tilde{t}\in \Z_p$. With $\tilde{t}=p, r=p^{-\nu}$ we
obtain $n(\alpha)\in \Z_p^\times$ since $\Delta n(v_2)r^2\in
\Z_p^\times$ and $\Delta n(v_2)t^2d \in p\Z_p^\times$.

If $d \in p\Z_p^\times$ we have $n(v_1)\in p\Z_p^\times$ and hence
$p^{-1}v_1\tilde{v_2}\otimes \delta \in R$ if $n(\tilde{v_2})\in
\Z_p^\times$ and $p^{-1}\tilde{v_2}\otimes \delta \in R,
p^{-2}v_1\tilde{v_2}\otimes \delta \in R$ if $n(\tilde{v_2})\in
p\Z_p^{\times}$. In the first case we put $\tilde{t}=1, r=1$, in the
second case we put $\tilde{t}=1, r=p^{-\nu}$. In both cases we obtain
$\alpha \in R$ with $n(\alpha)\in \Z_p^\times$, which proves the
assertion.

If finally $d \in p^2\Z_p$, we must have $n(v_1)\in p\Z_p^\times,
n(v_2)\in \Z_p^\times, \tilde{v_2}=v_2$. We can then take
$\alpha=v_2\otimes\delta$ with $n(\alpha)=n(v_2)\Delta \in
pZ_p^\times$.
This gives the $2$-dimensional subspace generated by
$v_2\otimes\delta, -\Delta n(v_2)+v_1v_2\otimes\delta$ of $V$; its
intersection with $\Lambda$ is the lattice
$\Z v_2\otimes\delta+\Z(-n(v_2)+\Delta^{-1}v_1v_2\otimes\delta)$, with
imprimitive Gram matrix (divisible by $p$ once) of determinant
$n(v_2)^2 d\in d (\Z_p^\times)^2$.
\end{proof}
\begin{myrem}
The reader may wonder (at least the author did) why certain types of
possible binary sublattices of $\Lambda$ do not appear in the
correspondence. One checks easily (by reducing modulo $p$) that indeed
at primes $p$ which are inert in $K/\Q$ the lattice 
$\Lambda_p$ has no primitive binary sublattices with $p$-imprimitive
Gram matrix (because its reduction mod $p$ has Witt index $1$).
\end{myrem}
\begin{theorem} \label{averagetheorem_nonsplit}
Let $\Delta$ be an odd
square free discriminant with an odd number of prime factors 
and $V$ the (unique up to isometry) positive
definite quadratic space of determinant $\Delta$ which has Witt
invariant $-1$ at all the primes dividing $\Delta$.
Realize $V$ by letting $D$ be the definite quaternion algebra over
$\Q$ ramified at all primes dividing $\Delta$ and   $K=\Q(\sqrt{\Delta}), D_K=D\otimes_\Q K$,
$V=\{\alpha \in D_K \mid \bar{\alpha}^{\tau}=\alpha\}$, with the
quaternionic norm form as quadratic form.

Let $\Lambda=R\cap
V$, where $R$ is a symmetric maximal order in $D_K$, be a maximal
lattice (of determinant $\Delta$) on $V$, put $L=\{z \in \Z1 +2R\mid
\tr(x)=0\}$. Let $\Lambda_i=y_iR\tau(y_i^{-1})$ with $y_i \in
D_{K,\A}^\times, n(y_i)\in \Q_\A^\times$ be a lattice in the genus of $\Lambda$,
put $L_i=y_iLy_i^{-1}$. 

Then for all $d \in \N$ one has 
\begin{equation*}
 r_{av}^*(\Lambda_i, d)=r^*(L_i,d), 
\end{equation*}
where $r^*(L_i,d)$ denotes the number of $\Z$-primitive representations
of $d$ by $L_i$.
\end{theorem}
\begin{proof}
This follows from the previous Lemma.  
\end{proof}
\begin{myrem}
\begin{enumerate}
\item The assertion in case B looks smoother than the result of Theorem
\ref{splitcase_average}. As explained earlier this is due to the fact
that we have restricted our attention to the maximal lattices $\Lambda$  on the
particular type of quadratic space $V$ described above. It appears
that the formulas for more general $\Lambda$ of non square determinant
will become messier than in the split case.   

\parindent=0pt
At least for $\Lambda$ of prime determinant our case is the general one.
\item As in \cite{bs_walds,bs_km} the formula for primitive
  representations above implies the relation
  \begin{equation*}
 r(L,d)=\sum_{m^2\mid d}\mu(m)m r_{av}(\Lambda,\frac{d}{m^2})   
  \end{equation*}
for representation numbers without primitivity condition.
From this one can obtain a formula relating the Koecher-Maaß series of a
type II Yoshida lift (as described in \cite{yoshida}) with the Mellin
transform of a corresponding Hilbert modular form of weight $3/2$,
replacing Corollary 2.2 of \cite{bs_km} in the non-split 
situation of case B.
Moreover, as in \cite{bs_km}, one can then obtain a proof of Böcherer's
conjecture for the Koecher-Maaß series of these Yoshida liftings of
type II. Details of this will be worked out separately.
\end{enumerate}
\end{myrem}
\section{Average asymptotic formula and applications}
We keep the notations of the preceding section for the cases A, B
respectively.
In particular, in case A we consider 
an order $R$ in the definite quaternion algebra $D$ over $\Q$ such that the completion $R_p$ is
\begin{itemize}
\item an order of level $p^{r_p}$ with $r_p$ odd (see
  \cite{pizer_quaternion2}) for the primes $p$ at which $D$ is
  ramified
\item an Eichler order of level $p^{r_p}$ for the primes $p$ at which
  $D$ splits,
\end{itemize}
with $r_p=0$ for almost all $p$ and write $N_1$ for the product of the
$p^{r_p}$ for the finite primes $p$ at which $D$ ramifies, $N_2$ for the product of the
local levels $p^{r_p}$ for the primes $p\nmid N_1$ and $N=N_1N_2$. The
lattice $\Lambda$ of determinant $N^2$ and level $N$ considered in case A is then one of the ideals
$I_{ij}$ introduced in Section 2, with associated ternary $\Z$-lattices
$L=L_i, L'=L_j$ as described in Section 2.
In case B we consider maximal $\Z$-lattices $\Lambda$
of odd square free determinant $\Delta\equiv 1 \bmod 4$, which are
realized in the quadratic space $V=\{\alpha \in D_K=D\otimes K\mid
\tau(\alpha)=\bar{\alpha}\}$ over $\Q$, where $K=\Q(\sqrt{\Delta})$
and  $D$ is a definite quaternion algebra over $\Q$ ramified precisely
at the primes ramified in $K/\Q$, i.e, at those dividing $\Delta$. To
such a $\Lambda$ we have associated a ternary ${\mathfrak
  o}_K$-lattice $L$ in the trace zero part of $D_K$.
\begin{lemma}\label{ternary_asymptotic}
With notations as above one has
\begin{eqnarray*}
r(L,d)r(L',d)&=&r(\gen(L),d)^2+\bigoh(d^{1-\frac{1}{16}+\epsilon}),\\
r^*(L,d)r^*(L',d)&=&r^*(\gen(L),d)^2+\bigoh(d^{1-\frac{1}{16}+\epsilon})
\quad\quad\ \text{in case A}\\
r^*(L,d)&=&r^*(\spn(L),d)+\bigoh(d^{1-\frac{25}{256}+\epsilon})
\quad\quad \text{in case B}, 
\end{eqnarray*}
where $r(\gen(L),d)$ respectively $r^*(\gen(L),d)$ denotes Siegel's weighted average of the
numbers of representations respectively $\Z$-primitive representations of $d$ by the isometry classes of
lattices in the genus of $L$ and where $r^*(\spn(L),d)$ denotes the
analogous average over the isometry classes in the spinor genus of
$L$.

Moreover, one has $\gen(L)=\spn(L)$ in case A and
$$r(\spn(L),d)=r(\gen(L),d)\cdot
\begin{cases}
  2\\1
\end{cases},$$ depending on
whether the square class of $d$ is or is not spinor exceptional in the
sense of \cite{kneser_mz}.
\end{lemma}
\begin{proof}
In both cases one sees from
\cite{kneser_klassenzahlen,earnest_hsia_spinornorms2} that the local
spinornorms of the lattice are all square classes of units at all
finite places. In case A this implies that the genus of $L,L'$
consists of only one spinor genus and the assertion follows from
\cite{sp_inventiones,duke_sp} and the currently best estimate for Fourier
coefficients of cusp forms of weight $3/2$ (orthogonal to the
one-dimensional theta series) of \cite{blomer_harcos_crelle}.

In case B the genus of $L$ consists of more than one spinor genus if
the class number of $K$ is even, with all the lattices $L_i$ arising
from the orders $R_i=y_iRy_i^{-1}$ with $n(y_i)\in \Q_\A^\times$ in
the same spinor genus. 
From \cite{earnest_hsia_hung} one
reads off that $d$ is a primitive spinor exception if and only if the square class
of $d$ is spinor exceptional in the sense of \cite{kneser_mz} (this case
occurs if and only if $\Q(\sqrt{-d\Delta},\sqrt{\Delta})$ is unramified over
$\Q(\sqrt{\Delta})$). In this case by \cite{kneser_mz} the genus of $L$
splits into two halves (``half genera'') consisting of equally many
spinor genera and such that $r(\spn(L'),d)$ resp. $r^*(\spn(L'),d)$ has the
same value for all lattices $L'$ in the same half, in particular it is zero for the
lattices in one
of the half genera and equal to $2r(\gen(L),d))$ resp. $2r^*(\gen(L),d))$
for lattices in the other half genus.

It is easily checked that for all $d$
represented locally everywhere by $L$ there are binary lattices of
determinant $d$ which are represented locally everywhere primitively
by $\Lambda$, 
hence are represented by some lattice $y\Lambda\tau(y^{-1})$ with
$n(y)\in \Q_\A^\times$ in the
genus of $\Lambda$. Theorem \ref{averagetheorem_nonsplit} then implies
that $d$ is represented primitively by the ternary lattice $yLy^{-1}$, which is in
the spinor genus of $L$. From the argument above we obtain $2
r^*(\gen(L),d)=r^*(\spn(L),d)$ in the case that the square class of
$d$ is spinor exceptional. On the other hand,
$r^*(\gen(L,d))=r^*(\spn(L,d))$ follows from \cite{kneser_mz} if the
square class of $d$ is not exceptional.

By \cite{SP-nagoya}
$r^*(L,d)-r^*(\spn(L),d)$ is the $\Z$-primitive Fourier coefficient
$$a_f^*(d)=\sum_{m \in \N, m^2\mid d}\mu(m)a_f(d/m^2)$$ at $d$ of a
Hilbert cusp form $f$ of weight $3/2$ in the orthogonal complement of the
unary theta series with Fourier coefficients $a_f(d)$. 

The estimate of
\cite{blomer_harcos_gafa,harcos_hilbertmodular} for the $a_f(d)$ then proves the
assertion. Notice that the restrictive condition on the represented
number of \cite[Theorem 2]{harcos_hilbertmodular} can be dropped here
since we have established that $d$ is either not in a spinor
exceptional square class or is primitive spinor exceptional
represented by the spinor genus of $L$. 
\end{proof}
\begin{theorem}\label{average_asymptoticformula} Let $-d$ run through discriminants such that binary lattices
  of determinant $d$ are represented primitively by the genus of
  $\Lambda$, and assume in case A that $d$ is not divisible by any
  prime $p\mid N$ such that $r_p>1$, write $N_d:=\frac{N}{\gcd(N,d)}$.
 Let $\epsilon>0$. 

Then there are constants $C_i=C_i(\epsilon, N)$
  for $i=1,2$  such
  that: 

\begin{enumerate}
 \item In case A for  $N=p^r$ a prime power one has 
   \begin{equation*}
 r_{av}^*(\Lambda,d)=
r_{av}^*(\gen(\Lambda),d)+\bigoh(d^{1-\frac{1}{16}+\epsilon})  
   \end{equation*}
and 
\begin{equation*}
  r_{av}(\Lambda,d)
  \ge C_2 d^{1-\epsilon} \text{ for } d>C_1 .
\end{equation*}
\item In case A with $N$ having more than one prime factor let
$N'\| N$ 
be  a  fixed divisor of $N$ dividing $N$ exactly and consider $d$ as
above with $N_d=\frac{N}{\gcd(N,d)}=N'$. Then one has  
\begin{equation*}
 r_{av}^*(\rpd(\Lambda,N'),d)= \sigma_0(N') r_{av}^*(\gen(\Lambda),d)+  \bigoh(d^{1-\frac{1}{16}+\epsilon})
\end{equation*}
and
\begin{equation*}
  r_{av}(\rpd(\Lambda,N'),d)
  \ge C_2 d^{1-\epsilon} \text{ for }  d>C_1  \text{ with } N_d=N'. 
\end{equation*}
Here we write
$r_{av}^*(\rpd(\Lambda,N'),d)=\sum_{s\mid N'}r_{av}^*(\Lambda^{*,s},d)$ as
in Section 2 and analogously for $r_{av}(\rpd(\Lambda,N'),d)$.
\item  In case B, in the situation of Theorem
  \ref{averagetheorem_nonsplit}, one has 
  \begin{equation*}
 r^*_{av}(\Lambda,d)=r^*_{av}(\spn(L),d)+\bigoh(d^{1-\frac{25}{256}+\epsilon})
  \end{equation*}
and
\begin{equation*}
  r_{av}(\Lambda,d) \ge C_2
  d^{1-\epsilon} \text{ for } d>C_1.
\end{equation*}
 \end{enumerate}
If one restricts here to $d$ for which $-d$ is fundamental  the local
condition is satisfied in case A if and only if $p$ is not split in
 $\Q(\sqrt{-d})$ for all $p \mid N_1$ and is not inert in this quadratic
extension for all $p \mid N_2$. 
It is satisfied in case B for all
positive $d$ for which $-d$ is a discriminant.
\end{theorem}
\begin{proof} 
For assertion b) we write $e_i=\vert R_i^\times\vert$ and have 
\begin{eqnarray*}
\sigma_0(N')r_{av}^*(\gen(\Lambda),d)&=&\sum_{i,j}\sum_{s\|N'}\frac{r_{av}^*(I_{ij}^{*,s},d)}{e_ie_j}\\
&=&\sum_{i,j}\sum_{m\in \N,m^2\mid
    d}\mu(m)\frac{r(L_i,\frac{d}{m^2})}{e_i}\frac{r(L_j,\frac{d}{m^2})}{e_j}\\
&=&\sum_{m\in\N, m^2\mid d}\mu(m)(r(\gen(L),\frac{d}{m^2})^2.  
\end{eqnarray*}
Since we have 
\begin{equation*}
r_{av}^*(\rpd(\Lambda,N'),d)=\sum_{m\in\N,m^2\mid d}\mu(m)r(L,\frac{d}{m^2})r(L',\frac{d}{m^2})  
\end{equation*}
by equation \ref{kmformula1b}, the asymptotic formula follows from the previous
lemma. The lower bound given for $r_{av}(\Lambda,d)$ then follows from
\begin{eqnarray*}
 \sigma_0(N_d) r_{av}(\gen(\Lambda),d)&\ge&\sigma_0(N_d) r_{av}^*(\gen(\Lambda),d)\\
&\ge & \sum_{i,j}\frac{r^*(L_i,d)r^*(L_j,d)}{e_ie_j}\\
&=&(r^*(\gen(L),d))^2\\
&\ge & cd^{1-\epsilon}
\end{eqnarray*}
for some constant $c>0$ if $d$ is large enough and represented locally
everywhere.

The other cases are proved similarly; in case c) we make use of the
fact that  $r^*(\spn(L),d)$ is equal to either $r^*(\gen(L),d)$ or $2
r^*(\gen(L),d)$ by Lemma \ref{ternary_asymptotic} if $d$ is as assumed.
\end{proof}
\begin{myrem}
The appearance of the set of all rescaled partial duals $\Lambda^{*,s}$
with $s \mid N'$ of $\Lambda$ in the
case of composite $N$ is somewhat unsatisfactory. The number of lattices
involved is obviously $2^{\omega(N')}$, but the set of distinct
isometry classes of these lattices may be smaller. The argument from
\cite{bs_km} would allow to replace representations of $T$ by
$\Lambda^{*,s}$ by representations of $sT$ by $\Lambda$ itself, if one
likes that better.
\end{myrem}

We restrict attention now to fundamental discriminants $-d$. 
\begin{proposition}
Let $-d$ be a fundamental discriminant as in Theorem
\ref{average_asymptoticformula} and $N_d=\frac{N}{\gcd(N,d)}$. Put
\begin{equation*}
  w=
  \begin{cases}
    6 & d=3\\
4&d=4\\
2& \text{otherwise}
  \end{cases}
\end{equation*}
and denote by $h(-d)$ the number of $SL_2(\Z)$-equivalence classes
of binary quadratic forms of discriminant $-d$ (equal to the ideal
class number of $\Q(\sqrt{-d})$) and by $2^t$ the number of genera of
such forms.

Then in case A we have:
\begin{equation}\label{massformel}
  h(-d)\sigma_0(N_d)r(\gen(\Lambda),T)=w(r(\gen(L),d)^2,
\end{equation}
where $\sigma_0$ denotes the number of divisors function and $T$ is an
integral binary symmetric matrix of discriminant $-d$.

In case B we have
\begin{equation}
  \label{massfomel_caseB}
\frac{h(-d)}{2^t}\sum_{T_i}r(\gen(\Lambda, T_i))=wr(\spn(L),d),  
\end{equation}
where the $T_i$ run over a set of representatives of the genera of
discriminant $-d$.
\end{proposition}
\begin{proof}
In case A Siegel's weighted average $r(\gen(\Lambda),T)$ of the
numbers of representations of $T$ by the classes of quadratic lattices
in the genus of $(\Lambda,n)$ is independent of $T$ since the local
lattices allow similitudes with an arbitrary unit as similitude
norm. In particular $r(\gen(\Lambda),T)$ has the same value 
for $T$ in all genera of binary quadratic forms of discriminant $-d$.
The assertion then follows as in the proof of Theorem
\ref{average_asymptoticformula}.

In case B such similitudes do not exist and $r(\gen(\Lambda), T)$ is
still the same for $T$ in the same genus but may
be different for  $T$ in different  genera of binary quadratic forms
of discriminant $-d$, in particular it may be zero for some and
nonzero for others. We have then
\begin{eqnarray*}
 wr(\gen(L),d)&=&wr_{av}(\gen(\Lambda,d)) \\
&=&\frac{h(-d)}{2^t}\sum_{T_i}r(\gen(\Lambda, T_i))
\end{eqnarray*}
as asserted.

Of course, the assertion could also be established by a direct computation of
local densities.
\end{proof}

We can now combine this last result with Theorem
\ref{average_asymptoticformula} 
and obtain  a surprising result about representations of
individual forms $T$ by $\Lambda$. For this we denote by
$\mu=\mu(\gen(\Lambda))$ the mass of the genus of 
the quadratic lattice $\Lambda$ and by $o_{\rm max} =o_{\rm max}
(\gen(\Lambda))$ the maximal order of 
the group of automorphisms of a lattice in the genus of $\Lambda$.
With this notation we have:
\begin{theorem}
  Let $-d$ run through
  fundamental discriminants satisfying the conditions of Theorem
  \ref{average_asymptoticformula}. 
In case A with $N$ having more than one prime factor let $N'$ be a
fixed exact divisor of $N$ and restrict further to  $d$ with $N_d=N'$.
Denote by $\nu_d$ 
  the number of binary quadratic forms $T$ of 
  discriminant $-d$ with $r(\rpd(\Lambda,N'),T)\ne 0$ in case A with
  $N$ not being a prime power, with $r(\Lambda,T)\ne 0$ in the other
  cases.

Then for
  all $\delta>0$ there is a constant $C_3=C_3(\delta)$ such that for all $d\ge
  C_3$ as above one has
   \begin{equation*}
    \nu_d \ge (1-\delta) \frac{\sigma_0(N')h(-d)}{\mu o_{\rm max}}.
   \end{equation*}
  in case A with $N$ not being a prime power,
  \begin{equation*}
       \nu_d \ge (1-\delta) \frac{h(-d)}{\mu o_{\rm max}}
  \end{equation*}
in case A with $N$ being a prime power, and 
\begin{equation*}
 \nu_d \ge (1-\delta)\frac{h(-d)}{2^t \mu o_{\rm max}} 
\end{equation*}
in case B.

In particular, for $d$ large enough a positive proportion of the classes of binary quadratic
forms of discriminant $-d$ is represented by at least one among the
rescaled partial dual lattices of $\Lambda$ comprising the set
$\rpd(\Lambda,N_d)$ in case A with $N$ not being a prime power, by
$\Lambda$ itself in the other cases.
\end{theorem}
\begin{proof}
 From Siegel's mass formula we see that we must have
 \begin{equation*}
  r(\Lambda,T) \le  o_{\rm max}\mu r(\gen(\Lambda),T)
 \end{equation*} in case A with $N$ a prime power and in case B,
 \begin{equation*}
   r(\rpd(\Lambda,N'),T) \le o_{\rm max}\mu r(\gen(\Lambda),T),
 \end{equation*} in case A with $N$ not a prime power,
since all terms in the weighted sum for $r(\gen(\Lambda),T)$ are non
negative.

On the other hand, as in the proposition we see in case A from the asymptotic
formula  of Theorem \ref{average_asymptoticformula} that the sum of these
terms over the classes of $T$ of discriminant $-d$ is asymptotic to
$h(-d)r(\gen(\Lambda, T))$ respectively to
$\sigma_0(N')h(-d)r(\gen(\Lambda, T))$, so we can estimate it for
sufficiently large $d$ from below by $(1-\delta)$ times this number.
The number $\nu_d$ of terms contributing to the sum must therefore be
at least as large as asserted.

In case B we have $$r(\gen(\Lambda, T))\le \frac{2^t w
  r(\gen(L),d)}{h(-d)}$$
by the proposition, and the same argument as above gives the assertion
in this case too.   
\end{proof}

\begin{myrem}
  \begin{enumerate}
  \item The ``sufficiently large'' in the theorem is not effective
    since our use of the asymptotic formula relies on the fact that
    $h(-d)$ is at least of the order of $d^{\frac{1}{2}-\epsilon}$ for
      all $\epsilon >0$, which estimate is well known to be
      ineffective.
\item Our method appears not to be suitable to guarantee the existence
  of $T$ which is represented by $\Lambda$ and lies in a given small
  (say of size $d^\delta$ with $0<\delta<\frac{1}{2}$) subset of the
  class group; such a result was obtained by Einsiedler, Michel,
  Lindenstrauss and Venkatesh by the ergodic method.
\item If the number of represented forms $T$ is as small as it can be
  the representation number $r(\Lambda,T)$ (or $r(\rpd(\Lambda,N'),T)$
  has to be rather large, i.\ e.\ at least of the order of magnitude of the
  number $r(\gen(R),T)$ from Siegel's mass formula (in fact, even
  larger).
\item In contrast to most other results about representation of
  quadratic forms of rank $>1$ by quadratic forms our result does not
  at all involve the minimum of the binary quadratic form $T$ to be
  represented, at least not explicitly.  
  \end{enumerate}
\end{myrem}
\section{Fourier coefficients of Siegel modular forms}
The results from \cite{bs_km} quoted above were used there to compute
averages of Fourier coefficients of special Siegel modular forms of
degree $2$, the Yoshida liftings of type I. These averages
appear there as the coefficients in the Dirichlet series of Koecher
and Maa\ss\  associated to such a Siegel modular form. An immediate
consequence is the following:

\begin{proposition}
Let $F\in M_k^{(2)}(\Gamma_0(N))$ be the Yoshida lifting of a pair
$f_1,f_2$ of primitive cuspidal elliptic Hecke eigenforms of weights
$k_1=2,k_2=2k-2$ and square free level $N$ and denote by \begin{equation*}
a(F,d)=\sum_T\frac{a(F,T)}{\epsilon(T)}
\end{equation*}
the $d-$th  coefficient of its Koecher-Maa\ss\  series,  
where $a(F,T)$ is the $T$-th Fourier coefficient of $F$, the sum is
over a set of $SL_2(\Z)$-equivalence classes of 
integral symmetric matrices
$T=\bigl(\begin{smallmatrix}2a&b\\b&2c\end{smallmatrix}\bigr)$ of
(signed) discriminant $b^2-4ac=-d$, and where $\epsilon (T)$ denotes
the number of proper automorphisms (or units) of $T$.

Then for any $\epsilon>0$ there is a constant $C$ such that one has
for all $d$
\begin{equation}
  a(F,d)\le C d^{\frac{k}{2}-\frac{1}{8}+\epsilon}.
\end{equation}
If above we replace the cusp form $f_1$ by the (appropriate)  Eisenstein
series of weight $2$ and level $N$  we obtain
 \begin{equation}
  a(F,d)\le C d^{\frac{k}{2}-\frac{1}{16}+\epsilon}.
\end{equation}
 \end{proposition}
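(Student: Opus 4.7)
The plan is to combine the explicit formula for the Koecher--Maa\ss\ coefficients of a Yoshida lift worked out in \cite{bs_km} with the best currently known subconvex bounds on Fourier coefficients of half-integral weight cusp forms, namely those already invoked in (\ref{eq:asymptoticformul_ternaryproduct}). Concretely, by the construction of Yoshida's lifting, $a(F,T)$ is a bilinear combination of the representation numbers $r(I_{ij},T)$ (weighted, when $k>2$, by a harmonic polynomial $P_{k-2}$ of degree $k-2$ in one of the representing vectors, which encodes the weight $2k-2$ of $f_2$) with coefficients $(\alpha_{f_1})_i(\alpha_{f_2})_j$ coming from the Brandt module eigenvectors attached to $f_1$ and $f_2$. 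Summing over $SL_2(\Z)$-classes of $T$ of discriminant $-d$ with weights $1/\epsilon(T)$ and applying (\ref{kmformula1}) term by term, the left hand side becomes $a(F,d)$ while the right hand side factors as
\begin{equation*}
a(F,d)=c(g_1,d)\cdot c(g_2,d),
\end{equation*}
where $g_1,g_2$ are the half-integral weight Shimura/Shintani preimages of $f_1,f_2$, of weights $3/2$ and $k-1/2$ respectively. This factorization is essentially what \cite{bs_km} used to express the Koecher--Maa\ss\ Dirichlet series of $F$ as a product of $L$-series of $g_1$ and $g_2$.

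Next I would apply the best known bound on each factor. For the weight-$3/2$ cusp form $g_1$ attached to the weight-$2$ cusp form $f_1$, the Blomer--Harcos estimate from \cite{blomer_harcos_crelle}---the very source of the error exponent $-1/16$ in (\ref{eq:asymptoticformul_ternaryproduct})---gives $|c(g_1,d)|\ll d^{1/2-1/16+\epsilon}$; the analogous subconvex bound at higher weight yields $|c(g_2,d)|\ll d^{(k-1)/2-1/16+\epsilon}$. Multiplying these two estimates produces the first asserted bound $a(F,d)\ll d^{k/2-1/8+\epsilon}$.

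For the Eisenstein variant, replacing $f_1$ by the weight-$2$ Eisenstein series of level $N$ makes $g_1$ the corresponding weight-$3/2$ Eisenstein series whose $d$-th Fourier coefficient is, up to fixed factors, the class number $h(-d)$. Here the trivial bound $h(-d)\ll d^{1/2+\epsilon}$ is essentially sharp and provides no cuspidal saving, while the bound $|c(g_2,d)|\ll d^{(k-1)/2-1/16+\epsilon}$ is unchanged; the product gives the slightly weaker exponent $d^{k/2-1/16+\epsilon}$.

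The main obstacle in executing this plan is the verification of the factorization $a(F,d)=c(g_1,d)c(g_2,d)$ when $k>2$: one must check that the polynomial factor $P_{k-2}$ appearing on the $j$-side of (\ref{kmformula1}) is precisely the harmonic polynomial realizing the weight-$(k-1/2)$ theta kernel whose Shimura image is $f_2$, and that the Brandt eigenvectors are normalized so that the bilinear pairing reproduces $a(F,T)$ exactly. This bookkeeping is implicit in \cite{bs_km}; once it is laid out, the remaining argument is a routine application of the known subconvex bounds.
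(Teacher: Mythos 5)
Your proposal is correct and follows exactly the route the paper intends (the paper offers no written proof beyond ``an immediate consequence'' of \cite{bs_km}): factor the Koecher--Maa\ss\ coefficient of the Yoshida lift as a product of Fourier coefficients of the weight $3/2$ and weight $k-\frac{1}{2}$ forms attached to $f_1,f_2$, then apply the Waldspurger/Blomer--Harcos bound $d^{\kappa/2-1/4-1/16+\epsilon}$ to each cuspidal factor and the class-number bound $d^{1/2+\epsilon}$ in the Eisenstein case; your exponent arithmetic ($\frac{1}{16}+\frac{1}{16}=\frac18$, resp.\ $0+\frac{1}{16}$) matches the stated bounds. The only bookkeeping you might add is the reduction from general $d$ to fundamental discriminants via the Shimura-lift relation for coefficients at $d_0m^2$, which preserves the estimates.
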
  
 \begin{myrem}
The exponent at $d$ in our estimate depends on what is known about
Fourier coefficients of modular forms in one variable of half integral weight. 
We discuss this dependence and possible improvements and compare our
result on the average Fourier coefficient $a(F,d)$ with 
known results and conjectures about the size of the individual Fourier
coefficients $a(F,T)$:
\begin{enumerate}
\item If one assumes the generalized Ramanujan
  conjecture for cusp forms of half integral weight in the complement
  of the one-dimensional theta series the exponents above would
  improve to
$\frac{k}{2}-\frac{1}{2}+\epsilon$ in the first case and to
$\frac{k}{2}-\frac{1}{4}+\epsilon$ in the second case above.
\item In the second case above the form $F$ is in the space of Maa\ss\
  or Saito-Kurokawa lifts, so that its Fourier coefficient at $T$  depends
  only on the determinant of $T$ and the summation over $T$ just
  multiplies the individual coefficient by the number of classes of $T$
  of determinant $d$.
\item Similar estimates can be obtained for vector valued Yoshida
  liftings associated to a pair of elliptic cusp forms $k_1,k_2$
  with $k_1>2,k_2>2$, using the results of
  \cite{bs_mellin_vector}. If such a vector valued lifting has
  transformation type ${\rm Sym}^j \otimes \det^k$ we obtain an
  exponent  $\frac{j+k}{2}-2\delta+\epsilon$ at $d$ with
  $\delta=\frac{1}{16}$ unconditionally and $\delta=\frac{1}{4}$ under
  the assumption of the generalized Ramanujan conjecture for cusp
  forms of half integral weight $\ge \frac{5}{2}$.

\parindent=0pt 
Similar estimates can also be obtained for Yoshida liftings of type
II, using Theorem \ref{average_asymptoticformula}.
   
\item A well known conjecture of B\"ocherer relates  $a(F,d)$ to the
  square of the central critical value of the twist by $\chi_{-d}$ of the
  spin $L$-function $Z_{\rm spin}(s,F)$ of $F$, if $F$ is a cuspidal
  Hecke eigenform for 
  the full Siegel modular group $Sp_2(\Z)$, more precisely it says
  that one should have
  \begin{equation*}
    Z_{\rm spin}(k-1, F, \chi_{-d})=c_F d^{1-k}(a(F,d))^2.
  \end{equation*}
It was shown in \cite{kriegraum} that the completed zeta function
$$\bigl(\frac{2\pi}{d}\bigr)^{-2s}\Gamma(s)\Gamma(s-k+2)Z_{\rm
  spin}(s,F, \chi_{-d})$$ has analytic continuation and satisfies a
functional equation under $s \mapsto 2k-2-s$. A subconvexity bound in
the conductor aspect for the central value of these twists would
therefore give an estimate 
of the type 
\begin{equation*}
   Z_{\rm spin}(k-1, F, \chi_{-d})=O(d^{1-\delta})
\end{equation*}
for some $\delta>0$.
 
Hence, if we assume B\"ocherer's conjecture and such a
  subconvexity bound for the central value of the twisted spin zeta
  function (with respect to the conductor 
  $d$ of the character) we get bounds for $a(F,d)$ of the above type
  (with $\delta$ in place of $1/8$);
  if we assume B\"ocherer's conjecture and the Lindel\"of hypothesis
  (again with respect to $d$) 
  for this central value we get the version given in a) which assumes the
  generalized Ramanujan conjecture for cusp forms of half integral
  weight. In the case that 
 the level of our modular forms is not $1$ we get the
  same type of estimate if we assume in addition to a generalized
  version of B\"ocherer's conjecture that the twisted spin zeta
  function has the same type of functional equation as given in
  \cite{kriegraum} in the case of level $1$ and satisfies the
  subconvexity bound mentioned above.  
  In the special case of the Yoshida liftings both
  the generalized version of B\"ocherer's conjecture and the required
  subconvexity estimate are known by \cite{bs_km}. 
\item    In \cite{kohnen} the Fourier coefficient at $T$ with $\det(T)=d$ of
   a cusp form of weight $k$ and degree $2$ for 
   the full Siegel modular group is estimated to be
   $O(d^{\frac{k}{2}-\frac{13}{36}+\epsilon})$. If we multiply this estimate by
     the number of integral equivalence classes of $T$ of determinant
     $d$ we obtain an exponent of $\frac{k}{2}+\frac{5}{36}+\epsilon$.
     The conjecture of Resnikoff and Salda\~{n}a \cite{res-sal} would
     give an exponent of 
     $\frac{k}{2}-\frac{3}{4}+\epsilon$ for the individual Fourier
     coefficient and hence an exponent of
     $\frac{k}{2}-\frac{1}{4}+\epsilon$ for the sum $a(F,d)$, which
     bound should be compared with the conjectural bound
     $\frac{k}{2}-\frac{1}{2}+\epsilon$ obtained above for Yoshida
     liftings outside the Maa\ss\ space under the assumption of the
     generalized Ramanujan conjecture for (good) cusp forms of half
     integral weight, or for general cusp forms orthogonal to the
     Maa\ss\  space under the assumption of B\"ocherer's conjecture and
     the Lindel\"of conjecture for the twisted spin zeta function. If
     we  consider only forms in  
     the complement of the Maa\ss\  space we see hence that our result for
     Yoshida liftings gives results for the averaged coefficient
     $a(F,d)$ that are significantly better than those obtained by
     multiplying the estimate for the individual Fourier coefficient
     by the number of summands. 
\end{enumerate}
 \end{myrem}
We might summarize the statements of the remark above in the
speculation that an estimate of the type 
\begin{equation}
  a(F,d)\le C d^{\frac{k}{2}-\delta +\epsilon}
\end{equation}
with some $\delta>0$ could be true and perhaps provable for a
general Siegel modular cusp form of degree $2$, including as in part
c) of the above remark the vector valued case. We might also speculate
that similar estimates may hold for averaged Fourier
coefficients of higher degree Siegel cusp forms.

Rainer Schulze-Pillot\\
Fachrichtung 6.1 Mathematik,
Universit\"at des Saarlandes (Geb. E2.4)\\
Postfach 151150, 66041 Saarbr\"ucken, Germany\\
email: schulzep@math.uni-sb.de
\end{document}